\newtheorem{theorem}{Theorem}
\newtheorem{lemma}{Lemma}[section]
\theoremstyle{definition}
\theoremstyle{remark}
\numberwithin{equation}{section}
\newcommand{\ba}{\begin{array}}
\newcommand{\ea}{\end{array}}
\newcommand{\f}{\frac}
\newcommand{\Om}{\Omega}
\newcommand{\la}{\lambda}
\newcommand{\R}{{\mathbf R}}
\newcommand{\ds}{\displaystyle}
\begin{document}
\date{}
\title{ \bf\large{Stability of Synchronized Steady State Solution of Diffusive Lotka-Volterra Predator-Prey Model}\footnote{Partially supported by National Natural Science Foundation of China (Grant Nos. 11801338, 11671239, 11571209), Science Council of Shanxi Province (Grant Nos. 201801D211001,  201801D221012), and  US-NSF grant DMS-1853598.}}
\author{Yongyan Huang\textsuperscript{1}\ \ Fuyi Li\textsuperscript{1}\ \ Junping Shi\textsuperscript{2}\footnote{Corresponding Author, Email: jxshix@wm.edu}
 \\
{\small \textsuperscript{1}School of Mathematical Sciences, Shanxi University, \hfill{\ }}\\
\ \ {\small Taiyuan, Shanxi, 030006, P.R. China. \hfill {\ }}\\
{\small \textsuperscript{2} Department of Mathematics, William \& Mary,\hfill{\ }}\\
\ \ {\small Williamsburg, Virginia, 23187-8795, USA.\hfill {\ }} }
\maketitle
\begin{abstract}
{We study the reaction-diffusion Lotka-Volterra predator-prey model with Dirichlet boundary condition. In the case of equal diffusion rates and equal growth rates, the synchronized steady state solution is proved to be locally asymptotically stable.}

 \noindent{\emph{Keywords}}:  reaction-diffusion systems; Lotka-Volterra predator-prey model; synchronized steady state solution; locally asymptotically stable.
\end{abstract}

\section {Introduction and Main Result}
In this paper, we  show the local asymptotic stability of synchronized solution of the following reaction-diffusion Lotka-Volterra predator-prey model
\begin{equation}\label{1.1}
 \begin{cases}
   \ds \Delta u+u[a-u-bv]=0, & \;\; x\in \Omega,\\
   \ds \Delta v+v[a-v+cu]=0, & \;\; x\in \Omega, \\
   \ds u=v=0, & \;\; x\in\partial\Omega.
 \end{cases}
\end{equation}
Here the functions $u(x)$ and $v(x)$ represent the population densities of prey and predator for $x\in \Om$ respectively; $\Om$ is a bounded connect open subset of $\R^n$ ($n\ge 1$) with a smooth enough boundary $\partial \Omega$. The diffusion coefficients and self-regulation of each species all equal to $1$,  the growth rates of the two species are the same constant $a>0$, and the predation rates $b>0$, $c>0$.

Our main result is stated as
\begin{theorem}\label{thm:1} Assume $0<b<1$ and $c>0$.
Let $\lambda_1$ be the principal eigenvalue of $-\Delta$ in $H_0^1(\Omega)$.
When  $a>\lambda_1$,  \eqref{1.1} has a positive solution
\begin{equation}\label{1.3}
(u_a,v_a)=(\frac{1-b}{1+bc}\theta_a,\frac{1+c}{1+bc}\theta_a)
\end{equation}
where $\theta_a$ is the unique positive solution of
\begin{equation}\label{1.3a}
  \begin{cases}
   \ds \Delta\theta+\theta(a-\theta)=0, & \;\; x\in \Omega,\\
   \ds \theta=0, & \;\; x\in \partial\Omega,
  \end{cases}
 \end{equation}
and $(u_a,v_a)$ is locally asymptotically stable.
\end{theorem}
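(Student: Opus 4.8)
\emph{Proof strategy.} The existence part is a direct substitution. Putting $u=\alpha\theta_a$, $v=\beta\theta_a$ with $\alpha=(1-b)/(1+bc)$ and $\beta=(1+c)/(1+bc)$ into \eqref{1.1}, the first equation becomes $\alpha\big[\Delta\theta_a+\theta_a(a-(\alpha+b\beta)\theta_a)\big]=0$ and the second $\beta\big[\Delta\theta_a+\theta_a(a-(\beta-c\alpha)\theta_a)\big]=0$; since $\alpha+b\beta=1=\beta-c\alpha$, both collapse to \eqref{1.3a}, so $(u_a,v_a)$ solves \eqref{1.1}. It is a positive solution because $0<b<1$ forces $\alpha,\beta>0$, while $a>\lambda_1$ guarantees that the logistic problem \eqref{1.3a} has a (unique) positive solution $\theta_a$ by classical theory. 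All the content is in the stability claim.

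For stability I would study the linearization of \eqref{1.1} at $(u_a,v_a)$. Using the identities $u_a+bv_a=\theta_a$ and $v_a-cu_a=\theta_a$ (which hold precisely because $\alpha+b\beta=\beta-c\alpha=1$), the reaction Jacobian at $(u_a,v_a)$ is $(a-\theta_a)I+\theta_a M$, so the linearized operator is
\[
L\begin{pmatrix}\phi\\\psi\end{pmatrix}=\Delta\begin{pmatrix}\phi\\\psi\end{pmatrix}+(a-\theta_a)\begin{pmatrix}\phi\\\psi\end{pmatrix}+\theta_a\,M\begin{pmatrix}\phi\\\psi\end{pmatrix},\qquad M=\begin{pmatrix}-\alpha & -b\alpha\\ c\beta & -\beta\end{pmatrix},
\]
with Dirichlet boundary conditions. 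Since $\Om$ is bounded with smooth boundary, $L$ has compact resolvent, so it suffices to show every eigenvalue $\mu$ of $L$ has $\operatorname{Re}\mu<0$. Two structural facts drive the argument: (i) $\operatorname{tr}M=-(\alpha+\beta)<0$ and $\det M=\alpha\beta(1+bc)>0$, so both eigenvalues $d_1,d_2$ of $M$ have negative real part (a short computation in fact gives discriminant $(b-c+2bc)^2/(1+bc)^2\ge 0$, so $d_1,d_2$ are real and negative); and (ii) $\theta_a$ is a \emph{positive} eigenfunction of $\Delta+(a-\theta_a)$ with eigenvalue $0$, so $0$ is the principal Dirichlet eigenvalue of $\Delta+(a-\theta_a)$, i.e.\ $-\int_\Om|\nabla\eta|^2+\int_\Om(a-\theta_a)|\eta|^2\le 0$ for all $\eta\in H_0^1(\Om;\mathbb{C})$.

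The plan is then to decouple: conjugating $L$ by the constant invertible matrix that puts $M$ into triangular (Jordan) form reduces the spectral problem for $L$ to the two scalar spectral problems for $\mathcal{L}_{d_i}:=\Delta+(a-\theta_a)+d_i\theta_a=\Delta+a-(1-d_i)\theta_a$, $i=1,2$, in the sense that $\sigma(L)\subseteq\sigma(\mathcal{L}_{d_1})\cup\sigma(\mathcal{L}_{d_2})$ — the off-diagonal term of the triangular form only affects eigenfunctions, not the spectrum, by back-substitution in the resolvent. For each $i$, if $\mathcal{L}_{d_i}\eta=\mu\eta$ with $\eta\in H_0^1(\Om;\mathbb{C})\setminus\{0\}$, then pairing with $\bar\eta$, integrating, and taking real parts gives, via (ii) and $\operatorname{Re}d_i<0$, $\theta_a>0$,
\[
\operatorname{Re}(\mu)\,\|\eta\|_{L^2}^2=-\int_\Om|\nabla\eta|^2+\int_\Om(a-\theta_a)|\eta|^2+\operatorname{Re}(d_i)\int_\Om\theta_a|\eta|^2\le \operatorname{Re}(d_i)\int_\Om\theta_a|\eta|^2<0,
\]
hence $\operatorname{Re}\mu<0$. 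Therefore every eigenvalue of $L$ has negative real part, and local asymptotic stability of $(u_a,v_a)$ follows from the principle of linearized stability for the associated semilinear parabolic system (analytic semigroup).

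I expect the main obstacle to be conceptual rather than computational: the decisive step is recognizing the algebraic identities that collapse the Jacobian into the form $(a-\theta_a)I+\theta_a M$ with $M$ a constant \emph{stable} matrix and with the \emph{same} weight $\theta_a$ that is the principal eigenfunction of $\Delta+(a-\theta_a)$ — this coincidence is exactly what makes the energy estimate close without any smallness assumption. A secondary point requiring care is the non-self-adjointness of $L$ (the cross terms $-bu_a$ and $cv_a$ are not symmetric), which is why one passes through the triangularization of $M$ and works with complex eigenfunctions of the scalar operators $\mathcal{L}_{d_i}$ rather than applying a variational characterization to $L$ directly; the borderline case $b-c+2bc=0$, where $M$ may fail to be diagonalizable, is handled uniformly by using the Jordan form in the decoupling step.
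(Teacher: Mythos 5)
Your proof is correct, and while it rests on the same structural fact that the paper exploits --- at $(u_a,v_a)$ the linearization is $\Delta+(a-\theta_a)I+\theta_a M$ with a constant matrix $M$, so everything reduces to scalar operators $\Delta+a-s\theta_a$ with $s>1$, which have negative spectrum because $\theta_a$ is the positive principal eigenfunction of $\Delta+a-\theta_a$ (Lemma \ref{lem:2.2}) --- your execution is genuinely different from the paper's. The paper posits the product ansatz $(\phi,\psi)=(A,B)\varphi_i(a-s\theta_a)$, derives the quadratic \eqref{2.6} for $A/B$, obtains two explicit families of eigenfunctions \eqref{pq}, then must argue that these exhaust the spectrum of \eqref{2.3} via completeness of the scalar eigenfunctions, and it treats the degenerate case $b=c/(2c+1)$ (coinciding roots, i.e.\ exactly your non-diagonalizable $M$) by a separate reduction to $\xi=(2c+1)\phi-\psi$. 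You instead triangularize $M$ by a constant conjugation, get the spectral inclusion $\sigma(L)\subseteq\sigma(\mathcal{L}_{d_1})\cup\sigma(\mathcal{L}_{d_2})$ by back-substitution, and finish each scalar problem with the energy estimate; note that $1-d_1=(2+c-b)/(1+bc)$ and $1-d_2=2$ are precisely the two admissible weights $s$ behind the paper's computation (the paper's display \eqref{pq} attaches the single value $s=(2+c-b)/(1+bc)$ to both families, a slip your calculation corrects: the family with ratio $(1-b)/(1+c)$ belongs to $s=2$). What your route buys: no completeness/orthogonality argument, a uniform treatment of $b=c/(2c+1)$ through the Jordan form, a transparent handling of the non-self-adjointness, and an argument that carries over verbatim to nonconstant $a(x)$. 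What the paper's route buys: an explicit identification of the entire spectrum and all eigenfunctions of \eqref{2.3}, rather than only the location of the spectrum in the open left half-plane. Both arguments then conclude with the standard principle of linearized stability, so your proposal is a complete and valid alternative proof.
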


Here the local asymptotic stability of  $(u_a,v_a)$ is with respect to the reaction-diffusion Lotka-Volterra predator-prey model:
\begin{equation}\label{1.1a}
 \begin{cases}
   \ds u_t=\Delta u+u[a-u-bv], & \;\; x\in \Omega,\;t>0,\\
   \ds v_t=\Delta v+v[a-v+cu], & \;\; x\in \Omega,\;t>0,\\
   \ds u(x,t)=v(x,t)=0, & \;\; x\in\partial\Omega,\;t>0\\
   u(x,0)=u_0(x)\ge 0, \;\; v(x,0)=v_0(x)\ge 0, & \;\; x\in \Omega.
 \end{cases}
\end{equation}
The steady state solution $(u_a,v_a)$ is referred as a synchronized solution as $u_a/v_a$ is a constant.

The existence of a positive steady state solution of \eqref{1.1a} when the growth rates are different for the two species is well known, see for example \cite{BB,Dancer,K}. For the one-dimensional spatial domain $\Om=(0,L)$, the uniqueness of positive solution was proved in \cite{MR02}. Here we show that the synchronized positive steady state of \eqref{1.1} is locally asymptotically stable, but it is unclear whether the synchronized solution is the only positive solution of \eqref{1.1}. We are indebted to one of the referees to tell us that, with an extra condition $c\in (0,(1-b)/b)$, the synchronized solution $(u_a,v_a)$ is shown to be unique and globally asymptotically stable \cite[Lemma 3.6]{Cantrell1993}, and that result is based on upper-lower solution method which is different from the one here. So here we prove the local stability of $(u_a,v_a)$ without the extra condition, but the global stability is not known. When the growth rates of predator and prey are different, the uniqueness and global stability of positive steady state  remains an interesting open question. We remark that our result still holds when the constant $a$ is replaced by  a function  $a\in C^\alpha(\overline{\Omega})$ for $\alpha\in (0,1)$ and $a(x)>0$ for $x\in \overline{\Omega}$.

The dynamic behavior of reaction-diffusion Lotka-Volterra competition model with same growth (resource) function has been studied more extensively. In \cite{MR01}, it is shown that the positive synchronized steady state solution for Dirichlet boundary value problem is unique and globally asymptotically stable. The effect of spatial heterogeneity and global stability of steady state for Neumann boundary value problem with non-constant growth function are studied in \cite{He,MR07,Lou}. The long time behavior of competition model when $a\equiv1$ is discussed in \cite{MR06}.

\section {Proof}
In order to prove the main result, first we recall the following well-known result:
\begin{lemma}\label{lem:2.1}
If $a>\lambda_1$, then \eqref{1.3a} has  a unique positive solution $\theta_a(x)$.
\end{lemma}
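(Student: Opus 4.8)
The plan is to obtain existence by the classical method of sub- and super-solutions, and uniqueness by a scaling (``sliding'') argument that exploits the strict monotonicity of $s\mapsto a-s$ together with the Hopf boundary lemma.

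For existence, let $\phi_1>0$ be a principal eigenfunction of $-\Delta$ in $H_0^1(\Omega)$, so that $-\Delta\phi_1=\lambda_1\phi_1$ in $\Omega$ and $\phi_1=0$ on $\partial\Omega$. Since $a>\lambda_1$, I would check that for sufficiently small $\ep>0$ the function $\ep\phi_1$ is a (strict) subsolution of \eqref{1.3a}, because $\Delta(\ep\phi_1)+\ep\phi_1(a-\ep\phi_1)=\ep\phi_1(a-\lambda_1-\ep\phi_1)>0$ once $\ep\|\phi_1\|_\infty<a-\lambda_1$; and any constant $M\ge\max\{a,\ep\|\phi_1\|_\infty\}$ is a supersolution with $M\ge\ep\phi_1$, since $\Delta M+M(a-M)=M(a-M)\le0$. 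Monotone iteration between these ordered sub- and super-solutions then produces a solution $\theta_a$ with $0<\ep\phi_1\le\theta_a\le M$ in $\Omega$; elliptic regularity gives $\theta_a\in C^2(\overline\Omega)$, and the strong maximum principle together with Hopf's lemma yields $\theta_a>0$ in $\Omega$ and $\partial\theta_a/\partial\nu<0$ on $\partial\Omega$.

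For uniqueness, let $\theta_1,\theta_2$ be two positive solutions. Since each is positive in $\Omega$ with strictly negative outward normal derivative on $\partial\Omega$, the ratio $\theta_2/\theta_1$ is bounded below by a positive constant, so $\bar t:=\sup\{t>0:\ t\theta_1\le\theta_2\ \text{in}\ \Omega\}$ is a well-defined positive number with $\bar t\,\theta_1\le\theta_2$. I would show $\bar t\ge1$ by contradiction: if $\bar t<1$, put $w:=\theta_2-\bar t\,\theta_1\ge0$, and subtracting the equations for $\theta_2$ and $\bar t\,\theta_1$ and rearranging yields an identity of the form $-\Delta w+q\,w=\bar t(1-\bar t)\theta_1^2$ in $\Omega$ with $q=\theta_2+\bar t\,\theta_1-a$ and strictly positive right-hand side. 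The coefficient $q$ is sign-indefinite, but adding a large constant $K\ge\|q\|_\infty$ turns this into $-\Delta w+(K+q)w\ge0$ with $K+q\ge0$ and a nontrivial nonnegative right-hand side, so the strong maximum principle forces $w>0$ in $\Omega$ and Hopf's lemma forces $\partial w/\partial\nu<0$ on $\partial\Omega$; comparing $w$ with $\theta_1$ near $\partial\Omega$ and on compact interior subsets then shows $\theta_2-(\bar t+\delta)\theta_1\ge0$ for some $\delta>0$, contradicting the maximality of $\bar t$. Hence $\bar t\ge1$, i.e. $\theta_1\le\theta_2$; interchanging $\theta_1$ and $\theta_2$ gives $\theta_1=\theta_2$.

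The only step requiring genuine care is the uniqueness argument: because $\theta_2+\bar t\,\theta_1-a$ has no definite sign, the strong maximum principle and Hopf's lemma cannot be invoked directly and must be applied after shifting the operator by a constant. (Equivalently, uniqueness is a special case of the Br\'ezis--Oswald theorem for $-\Delta u=f(x,u)$ with $u\mapsto f(x,u)/u$ strictly decreasing, here with $f(u)=u(a-u)$, the subsolution $\ep\phi_1$ handling the nondegeneracy hypothesis.) The remaining points are routine.
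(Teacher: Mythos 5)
Your argument is correct, and it supplies what the paper deliberately omits: for Lemma \ref{lem:2.1} the paper gives no proof at all, only a pointer to the literature (it cites \cite{MR07}, \cite[Theorem 25.4]{MR08} and \cite[Proposition 3.3]{MR05}), so there is no ``paper proof'' to match step by step. Your route is the standard one behind those references: existence by monotone iteration between the ordered pair $\ep\varphi_1$ (a strict subsolution precisely because $a>\lambda_1$) and a large constant supersolution, and uniqueness by the sliding argument for $\bar t=\sup\{t>0: t\theta_1\le\theta_2\}$. The computation at the heart of the uniqueness step is right: with $w=\theta_2-\bar t\theta_1$ one gets $-\Delta w+(\theta_2+\bar t\theta_1-a)w=\bar t(1-\bar t)\theta_1^2>0$, and your remark that the zeroth-order coefficient must be shifted by a constant $K\ge\|q\|_\infty$ before invoking the strong maximum principle and Hopf's lemma is exactly the point a careless write-up would miss; the strictly positive right-hand side also rules out $w\equiv 0$, and the comparison of $w$ with $\theta_1$ near $\partial\Omega$ (Hopf) and on interior compacta then contradicts the maximality of $\bar t$, as you say. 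Two equally standard alternatives, which you partly acknowledge, are the Br\'ezis--Oswald theorem (uniqueness from strict monotonicity of $u\mapsto (a-u)$) and the integral-identity argument obtained by testing the equation for $\theta_1$ against $\theta_2$ and vice versa; these avoid the boundary-ratio lemma (boundedness of $\theta_2/\theta_1$), which in your version silently uses the smoothness of $\partial\Omega$ and $C^1$ regularity up to the boundary, both available here. In short: correct, complete, and consistent with the results the paper cites rather than with an argument the paper itself provides.
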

This result was known in \cite{MR07}, see also \cite[Theorem 25.4]{MR08} or \cite[Proposition 3.3]{MR05}. To state the stability of $\theta_a$ with respect to the reaction diffusion equation:
\begin{equation}\label{1.3b}
  \begin{cases}
   \ds u_t=\Delta u+u(a-u), & \;\; x\in \Omega,\, t>0,\\
   \ds u(x,0)=0, & \;\; x\in \partial\Omega,\; t>0,\\
    u(x,0)=u_0(x)\ge 0, \;\; & \;\; x\in \Omega,
  \end{cases}
 \end{equation}
we recall some definitions and preliminary results.

Let $m(x) \in L^\infty(\Omega)$.
We define a linear operator
\begin{equation*}
 \ds L(m)\varphi=\Delta \varphi+m(x)\varphi, \;\;\; \varphi\in W^{2,p}_0(\Om),
\end{equation*}
where $p>n$. Then the eigenvalue problem
\begin{equation*}
 \begin{cases}
  \ds L(m)\varphi=-\lambda\varphi & \;\; x\in \Omega,\\
  \ds \varphi=0, &\;\; x\in\partial\Omega,
 \end{cases}
\end{equation*}
has a sequence of eigenvalues $\la_i(m)$ satisfying $\lambda_1(m)<\lambda_2(m)\leqslant\cdots\leqslant\lambda_i(m)\leqslant\lambda_{i+1}(m)\to \infty$, and
the corresponding eigenfunction is denoted by $\varphi_i(m)$ for $i=1,2,\cdots$. The set of all eigenfunctions $\{\varphi_i(m): i\in {\mathbb N}\}$ is an orthonormal basis of $L^2(\Om)$. In particular, $\la_1(m)$ is a simple eigenvalue of $-L(m)$ with a positive eigenfunction $\varphi_1(m)$, and all other eigenfunctions are sign-changing.
For convenience, the eigen-pair $(\lambda_i(0),\varphi_i(0))$ will be simply denoted by $(\lambda_i,\varphi_i)$ when $m\equiv0$.  From the variational characterization of $\la_i(m)$, if $m_1(x)\leqslant m_2(x)$ for $x\in \Om$, then
$\lambda_i(m_1)\geqslant\lambda_i(m_2)$ and $\lambda_i(m_1)>\lambda_i(m_2)$ when $m_1\not\equiv m_2$ for $i=1,2,\cdots$. 

From the monotonicity of eigenvalues with respect to the weight function, we have the following result implying the stability of $\theta_a$:

\begin{lemma}\label{lem:2.2}
Let $\theta_a$ be the unique positive solution of \eqref{1.3a}, and $s>1$. Consider the following eigenvalue problem
 \begin{equation}\label{2.1}
  \begin{cases}
   \ds \Delta\varphi+(a-s\theta_a)\varphi=-\lambda\varphi,&\;\; x\in \Omega,\\
   \ds \varphi=0, &\;\; x\in \partial \Omega.
  \end{cases}
 \end{equation}
Then the eigenvalues of \eqref{2.1} is a sequence satisfying $\lambda_{i,s}$ satisfying
$0<\lambda_{1,s}<\lambda_{2,s}\leqslant\cdots\leqslant\lambda_{i,s}\leqslant\lambda_{i+1,s}\to \infty$,  and the set of eigenfunctions $\{\varphi_{i,s}:i\in {\mathbb{N}}\}$ is an orthonormal basis of $L^2(\Om)$. In particular $\theta_a$ is a locally asymptotically stable steady state solution of \eqref{1.3b}.
\end{lemma}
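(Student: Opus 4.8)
\noindent\emph{Proof strategy.} The plan is to reduce the whole statement to the monotonicity of the eigenvalues $\lambda_i(\cdot)$ in the weight, applied to the two particular weights $a-\theta_a$ and $a-s\theta_a$. First I would note that $\theta_a$, being a positive classical solution of \eqref{1.3a}, satisfies $\Delta\theta_a+(a-\theta_a)\theta_a=0$ in $\Omega$ with $\theta_a=0$ on $\partial\Omega$ and $\theta_a>0$ in $\Omega$; hence $0$ is an eigenvalue of $-L(a-\theta_a)$ with a positive eigenfunction, and by the simplicity-and-sign characterization of the principal eigenvalue recalled above this forces $\lambda_1(a-\theta_a)=0$. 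By elliptic regularity $\theta_a\in C(\overline{\Omega})\subset L^\infty(\Omega)$, so $a-s\theta_a\in L^\infty(\Omega)$ for every $s$ and the general spectral theory recalled for $L(m)$ with $m\in L^\infty(\Omega)$ applies verbatim to \eqref{2.1}: its eigenvalues are $\lambda_{i,s}=\lambda_i(a-s\theta_a)$, they form an increasing sequence tending to $\infty$ with $\lambda_{1,s}$ simple, and the eigenfunctions $\{\varphi_{i,s}\}$ form an orthonormal basis of $L^2(\Omega)$.

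For the positivity of all the $\lambda_{i,s}$ I would use that, since $s>1$ and $\theta_a>0$ in $\Omega$, we have $a-s\theta_a\le a-\theta_a$ in $\Omega$ with $a-s\theta_a\not\equiv a-\theta_a$. The stated strict monotonicity of $\lambda_i(\cdot)$ then gives, for each $i$,
$$\lambda_{i,s}=\lambda_i(a-s\theta_a)>\lambda_i(a-\theta_a)\ge\lambda_1(a-\theta_a)=0,$$
so that $0<\lambda_{1,s}<\lambda_{2,s}\le\cdots\le\lambda_{i,s}\le\lambda_{i+1,s}\to\infty$, which is the claimed spectral picture.

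Finally, for the local asymptotic stability of $\theta_a$ as a steady state of \eqref{1.3b}: the linearization of $u\mapsto\Delta u+u(a-u)$ at $u=\theta_a$ is exactly $L(a-2\theta_a)$, i.e.\ the case $s=2>1$ above, so its largest eigenvalue $-\lambda_{1,2}$ is strictly negative and the entire spectrum of the linearized operator lies in $\{\operatorname{Re}z\le-\lambda_{1,2}\}$. By the principle of linearized stability for semilinear parabolic equations (the nonlinearity $f(u)=u(a-u)$ being smooth, so the standard analytic-semigroup theory applies), $\theta_a$ is a locally asymptotically stable steady state of \eqref{1.3b}. I do not anticipate a genuine obstacle here: the essential content is the single identity $\lambda_1(a-\theta_a)=0$ together with the already-stated eigenvalue monotonicity; the only points requiring a word of care are verifying $\theta_a\in L^\infty(\Omega)$ so that the $L(m)$ framework is applicable, and invoking the linearized-stability principle in a form adapted to the functional setting in which \eqref{1.1a}--\eqref{1.3b} are posed.
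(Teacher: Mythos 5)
Your proposal is correct and follows essentially the same route as the paper: identify $\lambda_1(a-\theta_a)=0$ via the positive eigenfunction $\theta_a$, use strict monotonicity of $\lambda_i(\cdot)$ in the weight to get $\lambda_{i,s}>0$ for $s>1$, and then read off stability from the $s=2$ case by linearized stability. The only difference is that you spell out a few routine points (boundedness of $\theta_a$, the semigroup framework) that the paper leaves implicit.
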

\begin{proof}
The first part follows directly from discussion above and from the monotonicity of eigenvalues with respect to the weight function, we have
\begin{equation*}
    \la_{1,s}=\la_1(a-s\theta_a)>\la_1(a-\theta_a)=0,
\end{equation*}
since $\theta_a$ is the positive eigenfunction corresponding to $\la_1(a-\theta_a)$. The positive solution $\theta_a$ is locally asymptotically stable with respect to \eqref{1.3b} if all the eigenvalues of
\begin{equation}\label{2.1w}
 \begin{cases}
  \ds \Delta\varphi+(a-2\theta_a)\varphi=-\lambda\varphi, & \;\; x\in \Omega,\\
  \ds \varphi=0, &\;\; x\in\partial\Omega,
 \end{cases}
\end{equation}
are positive. So $\theta_a$ is  locally asymptotically stable as $\la_{1,2}>0$.
\end{proof}

Now we prove our main result.
\begin{proof}[Proof of Theorem \ref{thm:1}] It is easy to verify that the solution $(u_a,v_a)$ given in \eqref{1.3} is a positive solution of \eqref{1.1} if $b\in (0,1)$ and $c>0$.
We shall prove that $(u_a,v_a)$ is linearly stable (and also locally asymptotically stable). That is, all eigenvalues $\mu$ of
\begin{equation}\label{2.2}
 \begin{cases}
  \ds \Delta\phi+(a-2u_a-bv_a)\phi-bu_a\psi=-\mu\phi,&\;\; x\in \Omega,\\
  \ds \Delta\psi+(a-2v_a+cu_a)\psi+cv_a\phi=-\mu\psi,&\;\; x\in \Omega,\\
  \ds \phi=\psi=0,&\;\; x\in \partial\Omega,
 \end{cases}
\end{equation}
have positive real parts. After substituting using \eqref{1.3},
\eqref{2.2} can be rewritten as
\begin{equation}\label{2.3}
 \begin{cases}
  \ds \Delta\phi+(a-(\frac{1-b}{1+bc}+1)\theta_a)\phi-b\frac{1-b}{1+bc}\theta_a\psi=-\mu\phi,&\;\; x\in \Omega,\\
  \ds \Delta\psi+(a-(\frac{1+c}{1+bc}+1)\theta_a)\psi+c\frac{1+c}{1+bc}\theta_a\phi=-\mu\psi,&\;\; x\in \Omega,\\
  \ds \phi=\psi=0,&\;\; x\in \partial\Omega.
 \end{cases}
\end{equation}

We claim that  there exist  $A_i,B_i\in \R$ and $s>1$ such that
\begin{equation}\label{2.4}
\binom{\phi_i}{\psi_i}=\binom{A_i}{B_i}\varphi_i(a-s\theta_a)
\end{equation}
is an eigenfunction  of \eqref{2.3} for $i=1,2,\cdots$, where $\varphi_i(a-s\theta_a)$ is the $i$-th eigenfunction of \eqref{2.1}.
First we assume that $b\ne c/(2c+1)$.
In this case, substituting \eqref{2.4} into \eqref{2.3} and using \eqref{2.1}, we find that $z=A_i/B_i$ must satisfy a quadratic equation
\begin{equation}\label{2.6}
c(1+c)z^2-(b+c)z+b(1-b)=0.
\end{equation}
It is easy to verify that \eqref{2.6} has two distinctive positive real roots $z_1=b/c$ and $z_2=(1-b)/(1+c)$, and
\begin{equation}\label{pq}
    \Phi_{i,1}=\binom{\phi_{i,1}}{\psi_{i,1}}:=\binom{b}{c}\varphi_i(a-s\theta_a), \;\; \Phi_{i,2}=\binom{\phi_{i,2}}{\psi_{i,2}}:=\binom{1-b}{1+c}\varphi_i(a-s\theta_a)
\end{equation}
are two linearly independent eigenfunctions of \eqref{2.3} with $s=\ds\frac{2+c-b}{1+bc}>1$ as $b\in (0,1)$ and $c>0$.    The corresponding eigenvalue $\mu=\mu_i=\la_i(a-s_1\theta_a)>0$ from Lemma \ref{lem:2.2}. Moreover since $\{\varphi_{i,s}:i\in {\mathbb{N}}\}$ is an orthonormal basis of $L^2(\Om)$, the set
\begin{equation*}
    \left\{\binom{\phi_{i,1}}{\psi_{i,1}},\binom{\phi_{i,2}}{\psi_{i,2}}:i\in {\mathbb N}\right\}
\end{equation*}
is an orthogonal basis of $L^2(\Omega)\times L^2(\Omega)$. This in turn implies that the set of eigenvalues of \eqref{2.3} is exactly$\{\la_i(a-s_1\theta_a):i\in {\mathbb N}\}$. Secondly if $b= c/(2c+1)$, the above argument still shows that $\Phi_{i,1}$ and $ \Phi_{i,2}$ defined in \eqref{pq} are eigenfunctions of \eqref{2.3} with $s=2>1$ but $\Phi_{i,1}=k\Phi_{i,2}$ for some constant $k\ne 0$. In this case, if $(\phi,\psi)$ is an eigenfunction of \eqref{2.3} with eigenvalue $\mu$, then it is easy to verify that $\xi=(2c+1)\phi-\psi$ satisfies
\begin{equation}\label{2.1ww}
 \begin{cases}
  \ds \Delta\xi+(a-2\theta_a)\xi=-\mu\xi, & \;\; x\in \Omega,\\
  \ds \xi=0, &\;\; x\in\partial\Omega.
 \end{cases}
\end{equation}
That is, $\mu$ must be equal to some  $\la_i(a-2\theta_a)$, and that in turn implies that $(\phi,\psi)$ is a multiple of $\Phi_{i,1}$. So we can also conclude that the set of eigenvalues of \eqref{2.3} is exactly$\{\la_i(a-s_1\theta_a):i\in {\mathbb N}\}$.

 Therefore $(u_a,v_a)$ is linearly stable and also locally asymptotically stable since $\la_i(a-s_1\theta_a)>0$ for each $i\in {\mathbb N}$.
\end{proof}

\noindent{\bf Acknowledgment}: The authors thank two anonymous reviewers for their very helpful suggestions which greatly improve the initial draft.

\bibliographystyle{plain}
\bibliography{huangyongyan-2-bib}

\end{document}